\theoremstyle{plain}
\newtheorem{thm}{Theorem}[section]
\newtheorem{proposition}[thm]{Proposition}
\theoremstyle{definition}
\theoremstyle{remark}
\newtheorem{rem}{Remark}
\title{Goodness--of--fit tests for stochastic frontier models \\  based on the characteristic function}
\author{S. G. Meintanis\footnotemark[1] \;,\;  Christos K. Papadimitriou\footnote{Department of Economics, National and Kapodistrian University of Athens, Athens, Greece \newline  \textit{Corresponding author:} S. G. Meintanis (simosmei@econ.uoa.gr)} }
\date{}
\begin{document}
\maketitle

\begin{abstract}

We consider goodness--of--fit tests for the distribution of the composed error in Stochastic Frontier Models. The proposed test statistic utilizes the characteristic function of the composed error term, and is formulated as a weighted integral of properly standardized data. The new test statistic is shown to be consistent and computationally convenient. Simulation results are presented whereby  resampling versions of the new tests are compared to classical goodness--of--fit methods.

\end{abstract}

\section{Introduction}

\label{intro}
The stochastic frontier production model (SFM) was first introduced by Aigner, Lovell, and Schmidt (ALS) \citeyearpar{ALS77} and Meeusen and van den Broeck \citeyearpar{MB77}, in the form of a  Cobb-Douglas production function,
\begin{equation} \label{SFM}
Y=\beta^\top X+\varepsilon=\beta^\top X+v-u,  
\end{equation}
where $Y$ is the maximum log--output obtainable from a vector of log--inputs $X=(x_1,...,x_d)\in \mathbb R^d$, $\beta\in\mathbb R^d,  \ (d\geq 1)$, is an unknown vector of parameters, and $\varepsilon=v-u$, denotes the composed error term. The random component $v$ is intended to capture the effects of purely random statistical noise (disturbances beyond the firm's control), while $u\geq0$ is intended to capture the effects of technical inefficiency which are specific to each firm.

We now review earlier work on goodness--of--fit tests for certain aspects of the SFM. Schmidt and Lin \citeyearpar{SL84} and Coelli \citeyearpar{Co95} suggested tests of normality for the composed error term $\varepsilon$ by means of the empirical third moment of the OLS residuals. A moment-based method employing skewness and excess kurtosis has also been proposed quite recently by Papadopoulos and Parmeter \citeyearpar{PP21}. Lee \citeyearpar{Lee83} proposed Lagrange Multiplier tests for the normal/half-normal and the normal/truncated-normal SFM  within the Pearson family of truncated distributions. Kopp and Mullahy \citeyearpar{KM90} introduced GMM--based tests for the distribution of the inefficiency component $u$ by simply assuming the noise component $v$ to be  symmetric, but not necessarily normally distributed. They also suggest a GMM--based test for the symmetry assumption utilizing odd order moments of residuals. Bera and Mallick \citeyearpar{BeMa02} also suggest tests that enjoy moment interpretations but they test their moment restrictions by means of the information matrix. Most of the aforementioned tests however are not omnibus, i.e. they may have negligible power against certain alternatives. While Wang et al. \citeyearpar{WAS11} also suggest certain non-omnibus procedures, they  are probably the first to apply specification procedures, such as the Kolmogorov--Smirnov test, that are {\it{omnibus}}, i.e.  procedures which, being based on consistent tests,  enjoy non-negligible power for arbitrary deviations from the null model, and not just for directive alternatives. These authors  are also innovative by suggesting the use of the {\it{bootstrap}} in order to compute critical points and actually carry out the test in practice.  A further innovation is brought forward by Chen and Wang \citeyearpar{CW12} who in effect propose to use the {\it{characteristic function}} (CF) for testing distributional specifications in SFMs.
 
One important aspect of the SFM specification methods is the distribution of the technical inefficiency term $u$. In this paper we proceed in the lines set forward by Wang et al. \citeyearpar{WAS11} and Chen and Wang \citeyearpar{CW12} and suggest bootstrap--based omnibus specification tests for the composed error with special emphasis on the law of the inefficiency component  $u$ in  SFMs that utilize the CF. Our tests make use of the fact that  CFs are often easier to compute than densities or distribution functions, and also utilize the property that conditionally on the independence of  $u$ and $v$, the CF of the composed error term $\varepsilon$ may easily be obtained from the product of the CFs of $u$ and $v$. The rest of the paper unfolds as follows. In Section \ref{sec_2} we introduce the tests, discuss some aspects of the test statistics, prove consistency, and also consider estimation of parameters. In Section \ref{sec_3} we present a Monte Carlo study of a bootstrap--based version of the news  tests in the case of a normal/exponential and a normal/gamma SFM.  Conclusions and outlook are presented in Section \ref{sec_4}. A few technical arguments are deferred to Appendices \ref{Appendix A} and \ref{Appendix B}. There is also an accompanying \textit{Supplement} containing Monte Carlo results for some extra simulation settings.

\section{Goodness--of--fit tests}\label{sec_2}
In this section we consider tests for SFMs with exponentially distributed inefficiency and also tests for SFMs with gamma distributed inefficiency. In the first case the parameters are fully unspecified while in the latter case they are partially specified.  
\subsection{Tests for the composed error with exponential inefficiency}
Let $Z$ denote an arbitrary random variable, and recall that the CF of $Z$ is defined by $\varphi_Z (t)=\mathbb{E}({e^{itZ} })=\mathbb E[\cos(t Z)+i \sin(tZ)]\equiv C_Z(t)+iS_Z(t), \: t\in \mathbb R$, with $i=\sqrt{-1}$, where $C_Z(t)$ and $S_Z(t)$ denote the real and imaginary parts, respectively, of $\varphi_Z(t)$.  A few basic properties of CFs that will be used here are the following: (a) For the CF of $Z$ it holds $\varphi_Z(-t)=\varphi_{-Z}(t)={\overline {\varphi_Z(t)}}$, where $\overline z$ denotes the conjugate of the complex number $z$, (b) if $Z$ has a symmetric around zero distribution, then its CF is real-valued, i.e. it holds $S_Z(t)\equiv 0$, and hence $\varphi_Z(t)\equiv C_Z(t)$, and (c) if $Z_1$ and $Z_2$ are independent then $\varphi_{Z_1+Z_2}(t)=\varphi_{Z_1}(t)\varphi_{Z_2}(t)$.

Consider now the SFM in equation \eqref{SFM}, and suppose that on the basis of data $(X_j,Y_j), \ j=1,...,n$, we wish to test the null hypothesis
\begin{equation} \label{null}
\begin{split}
{\cal{H}}_{0}: \; & \textrm{Model (\ref{SFM}) holds true with }  
u\sim {\rm{Exp}}(\theta) \textrm{ , for some } \theta >0,
\end{split}
\end{equation}
where ${\rm{Exp}}(\theta)$ denotes the exponential distribution with density $\theta^{-1} \ e^{-x/\theta}$. At this stage the law of the pure statistical error $v$ will be left unspecified.  

In this connection, and since in the context of SFMs, $u$ and $v$ are assumed independent and $v$ is typically assumed to have a distribution that is  symmetric around zero, it readily follows that the CF of the composed error term may be computed as 
\begin{eqnarray} \label{cf1}
\varphi_{\varepsilon}(t)& = & \varphi_{v-u}(t)=\varphi_v(t)\varphi_u(-t)= C_v(t){\overline {\varphi_u(t)}}\\ \nonumber &=&  C_v(t)(C_u(t)- iS_u(t)). \end{eqnarray}
and hence if in addition we assume that $C_v(t)\neq 0, \ t\in \mathbb R$, then
\begin{equation} \label{cf11}
S_{\varepsilon}(t) + tC_{\varepsilon}(t)=-C_v(t)\left(S_u(t)-t C_u(t)\right)=0,  
\end{equation}
if and only if 
\begin{equation}
\label{cf1u}
S_{u}(t) - tC_{u}(t)=0. 
\end{equation}
However, Henze and Meintanis \citeyearpar{HM02} have shown that equation \eqref{cf1u} is a characterization of the  unit exponential distribution. Consequently  
\eqref{cf11} holds, if and only if \eqref{cf1u} holds for all $t\in\mathbb R$, which in turn only holds  under the null hypothesis ${\cal{H}}_{0}$ in \eqref{null} with $\theta=1$. This fact justifies taking the left hand side of equation \eqref{cf11} as the point of departure of our test, but in order to reduce the test of the null hypothesis ${\cal{H}}_0$ to unit exponentiality, we will consider instead of $\varepsilon$ the standardize error defined by  $\widetilde \varepsilon=\varepsilon/\theta$.

To this end recall that the SFM in \eqref{SFM} depends on the regression parameter $\beta$, that under the null hypothesis \eqref{null} this model also involves the exponential parameter $\theta$ and that the pure statistical error $v$ may also involve an unknown parameter. Let  $\widehat{\varepsilon}_j=Y_j-\widehat\beta^\top X_j$, be the residuals of the SFM (\ref{SFM}) under the null hypothesis ${\cal{H}}_0$. Clearly these residuals, besides being dependent on the regression parameter $\widehat \beta$, they are also computed conditionally on suitable estimates of the aforementioned distributional parameters; see Section \ref{sec_3} for parameter estimation. We will write  $\widehat{\widetilde{\varepsilon}}_j=\widehat{\varepsilon}_j/\widehat{\theta}, \ j=1,\ldots,n$, for the respective standardized residuals;  Then the left hand side of equation (\ref{cf11}) may be estimated by 
\begin{equation} \label{cf12}
D_n(t):=S_n(t)+tC_n(t),  t \in \mathbb{R}, 
\end{equation}
where 
\begin{displaymath}
C_n(t)=\frac{1}{n} \sum_{j=1}^{n}\cos(t\widehat{\widetilde\varepsilon}_j), \ S_n(t)=\frac{1}{n} \sum_{j=1}^{n}\sin(t\widehat{\widetilde\varepsilon}_j),
\end{displaymath}
with $C_n$ (resp. $S_n$) being an estimator of 
$C_{\widetilde\varepsilon}$ (resp. $S_{\widetilde\varepsilon}$). In view of \eqref{cf11}, $D_n(t)$ should be close to zero under the null hypothesis ${\cal{H}}_0$ identically in $t\in \mathbb R$, at least for large sample size $n$. Thus it is reasonable to reject ${\cal{H}}_0$ for large values of the test statistic
\begin{equation} \label{ts}
T_{n,w}= n \int_{-\infty}^{\infty} D^2_{n}(t) \; w(t) \; {\rm{d}}t,   
\end{equation}
where $w(t)>0$ is an integrable weight function\footnote{The test for the cost frontier model with $\varepsilon=v+u$ may be computed by modifying equation \eqref{cf12} to $D_n(t):=S_n(t)-tC_n(t)$ and by analogously defining the test statistic in equation \eqref{ts}.}. The test figuring in \eqref{ts} is an integrated and weighted test, and in this sense it is analogous to a Cram\'er-von Mises test, the only difference being that instead of the estimated distribution function used in the latter test, within $T_{n,w}$, and specifically in $D^2_{n}(t)$, we employ the estimated CF of the underlying law. In view of the uniqueness property of CFs and the positivity of the weight function $w(t)$,  formulation \eqref{ts} leads to a test statistic that is (globally) consistent, and thus to an omnibus test; see Proposition \ref{prop1}. However the uniqueness property of CFs only holds if, as it is done in \eqref{ts},  this CF is considered over all possible arguments $t \in \mathbb R$, and therefore the chi-squared tests suggested in Chen and Wang \citeyearpar{CW12} (see also Wang et al. \citeyearpar{WAS11}) which are based on computing the empirical CF over a finite grid of points, are not omnibus. One the other hand chi-squared tests have the advantage of a simple limit null distribution with well known and tabulated critical points, and thus the practitioner does not necessarily need to resort to bootstrap resampling. Despite this advantage however, convergence to the limit chi-squared distribution may prove quite slow and thus often one has to revert back to bootstrap for actual test implementation; see for instance Wang et al. \citeyearpar{WAS11}.

While the consistency of the test based on $T_{n,w}$  may be proved for a general class of weight functions, the choice $w(t)=e^{-\lambda |t|}, \ \lambda>0$, is particularly appealing from the computational point of view. To see this write $T_{n,\lambda}$ for the test statistic in \eqref{ts} with weight function $e^{-\lambda |t|}$. Then after some straightforward algebra (refer to Appendix \ref{Appendix A} for details) we obtain 
\begin{eqnarray} \label{ts2}
T_{n,\lambda}  & = &  \nonumber
\frac{\lambda}{n}\sum_{j,k=1}^n \frac{1}{\lambda^2+(\widehat{\widetilde{\varepsilon}}_{jk}^{-})^2} -
\frac{1}{\lambda^2+(\widehat{\widetilde{\varepsilon}}_{jk}^{+})^2} + \frac{4\widehat{\widetilde{\varepsilon}}_{jk}^{+}}{(\lambda^2+(\widehat{\widetilde{\varepsilon}}_{jk}^{+})^2)^2}  \\ 
 & + & \; 
 \frac{2(\lambda^2-3(\widehat{\widetilde{\varepsilon}}_{jk}^{+})^2)}{(\lambda^2+(\widehat{\widetilde{\varepsilon}}_{jk}^{+})^2)^3} + \frac{2(\lambda^2-3(\widehat{\widetilde{\varepsilon}}_{jk}^{-})^2)}{(\lambda^2+(\widehat{\widetilde{\varepsilon}}_{jk}^{-})^2)^3},
\end{eqnarray}
where we write $\sum_{j,k}$ for the double sum $\sum_j\sum_k$, and where $\widehat{\widetilde{\varepsilon}}_{jk}^{+}=\widehat{\widetilde{\varepsilon}}_{k}+\widehat{\widetilde{\varepsilon}}_{j}$, and $\widehat{\widetilde{\varepsilon}}_{jk}^{-}=\widehat{\widetilde{\varepsilon}}_{k}-\widehat{\widetilde{\varepsilon}}_{j}$, $j,k=1,\ldots,n$. 

We now illustrate the role that the weight function $e^{-\lambda |t|}$ plays in the test statistic $T_{n,\lambda}$. To this end we use  expansions 
of the trigonometric functions $\sin(\cdot)$ and $\cos(\cdot)$, and after some algebra (refer to Appendix \ref{Appendix B} for details) we obtain from \eqref{ts}   
\begin{eqnarray}\label{limstat}
\lim_{\lambda \to \infty} \frac{\lambda^3}{4} T_{n,\lambda} = n\left(\frac{1}{n} \sum_{j=1}^n \widehat{\widetilde{\varepsilon}}_{j}+1\right)^2. 
\end{eqnarray}

The ``limit statistic" in the right--hand side of \eqref{limstat} measures normalized distance from unity of the sample mean of the standardized residuals  $\widehat{\widetilde{\varepsilon}}_{j}$. Recall in this connection that under the null hypothesis ${\cal{H}}_0$, $\mathbb E(\widetilde\varepsilon)=-1$, and thus this distance should vanish under ${\cal{H}}_0$, as $n\to\infty$. However this same distance will also vanish under an alternative for which the standardized error term happens to have expectation equal to one. In conclusion taking a value of the weight parameter $\lambda$ that is ``too large" forces the test to depend on lower order moments of the residuals and should be avoided if the test is to have good power against alternatives with arbitrary moment structure. On the other hand, values of $\lambda$ too close to the origin result in a test that is prone to numerical error due to periodicity of  trigonometric functions.       

\begin{rem}\label{rem1} 
It is  clear  from  equations \eqref{cf1}--\eqref{cf11} which are instrumental in defining our test statistic that \eqref{cf1u} is robust to the law of the pure statistical error $v$, as long as this law satisfies $C_v(t)\neq 0, \ t \in \mathbb R$.  Of course the test statistic is  conditioned on a preliminary estimation step, and thus rejecting on the basis of the test in \eqref{ts} implies rejection of the ``entire" normal/exponential law for the composed error in that this entire law is present in the test statistic both at the estimation  as well as at the test construction step following the estimation step. In this sense  our test has power not only against non-exponential specifications for $u$, but also against any non-normal specification for $v$, such as the Student-t (see Wheat et al. \citeyearpar{WSG19}) and the stable (see Tsionas \citeyearpar{tsionas12}) specification. We refer the interested reader to the accompanying \textit{Supplement} for corresponding Monte Carlo results. In this connection we note that a large class of distributions  with $\varphi_Z\neq 0$ is the class of infinitely divisible laws (see Sasv\'ari \citeyearpar{sasvari}, \S3.11). At the same time, and while tailored specifically to the null hypothesis ${\cal{H}}_0$ of exponentiality in \eqref{null}, our test may also be applied with any other law of $v$ with a non-vanishing CF. To do so one has to apply \eqref{ts} as test statistic but the residuals  have to be computed via, say maximum likelihood, that takes into account the specific non-normal law postulated for $v$.
\end{rem}

Continuing on the power properties and due to the  uniqueness of CFs (see Sasv\'ari \citeyearpar{sasvari} Theorem 1.3.3), we maintain that the test statistic $T_{n,w}$ defined by \eqref{ts} has asymptotic power one as $n\to\infty$ for arbitrary deviations from the null hypothesis ${\cal{H}}_0$. This result is formally stated and proved below. As it is already implicit $X^\top$ denotes vector transposition, and we also write $\|X\|=(\sum_{k=1}^d x^2_k)^{1/2}$ for the Euclidean length of $X$.      

\begin{proposition}\label{prop1}
Consider the SFM in \eqref{SFM} and suppose the following conditions hold: {\rm{(C1)}} The CF of $v$ is real--valued and satisfies $\varphi_v\neq 0$,  the regressor $X\in \mathbb R^d, \ d\geq 1$, has finite mean and $(X,v,u)$ are mutually independent, {\rm{(C2)}} the distributions of $u$ and $X$ are such that $u+a^\top X$ is not exponentially distributed for any $d$--vector $a\neq 0$ {\rm{(C3)}} the estimator  $\widehat \beta$  satisfies $\widehat\beta\to b$ almost surely (a.s.) as $n\to\infty$ for some $b\in\mathbb R^d$, with $b=\beta_0$ (the true value)  under ${\cal{H}}_0$ and {\rm{(C4)}}  the weight function $w>0$ is such that $\int_{\mathbb R} t^2 w(t) dt<\infty$. Then for the test statistic in \eqref{ts} it holds 
\begin{equation} \label{conv}
\frac{T_{n,w}}{n} \rightarrow \int_{-\infty}^\infty D^2(t) w(t) dt:=\Delta_w,
\end{equation} 
a.s. as $n\to\infty$, with $D(t)=S_{e}(t)+tC_{e}(t)$, where $C_{e}$ (resp. $S_e$) denotes the real (resp. imaginary) part of the CF of 
$e_j(b):=Y_j-b^\top X_j, \ j=1,...,n$.    
\end{proposition}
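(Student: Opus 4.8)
The plan is to establish \eqref{conv} in three stages: reduce $D_n(t)$, which is built from the \emph{estimated} standardized residuals $\widehat{\widetilde\varepsilon}_j$, to a fixed--parameter empirical characteristic function; prove uniform--on--compacts a.s.\ convergence of the latter; and then interchange the limit with the integral over $t$ by dominated convergence. I take $(X_j,Y_j)$ to be i.i.d.\ and let $(\widehat\beta,\widehat\theta)\to(b,\vartheta)$ a.s., where $\widehat\beta\to b$ is (C3) and $\widehat\theta\to\vartheta>0$ is supplied by the estimation step of Section~\ref{sec_3} (the statement is most naturally read with this extra consistency, which I assume). I write $e_j=(Y_j-b^\top X_j)/\vartheta$ for the limiting standardized residual, whose CF has real and imaginary parts $C_e,S_e$, so that $D(t)=S_e(t)+tC_e(t)$ and, under ${\cal H}_0$ (where $b=\beta_0,\ \vartheta=\theta_0$), $D\equiv0$ by \eqref{cf11}--\eqref{cf1u}.

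First I would bound the plug--in error. With the Lipschitz inequalities $|\cos a-\cos a'|\le|a-a'|$ and $|\sin a-\sin a'|\le|a-a'|$, the differences $C_n(t)-C_n^0(t)$ and $S_n(t)-S_n^0(t)$ between the estimated empirical CF and its fixed--parameter analogue $C_n^0,S_n^0$ (formed with $e_j$ in place of $\widehat{\widetilde\varepsilon}_j$) are each bounded in modulus by $|t|\,\delta_n$, where $\delta_n=\tfrac1n\sum_j|\widehat{\widetilde\varepsilon}_j-e_j|$. Splitting
\[
\widehat{\widetilde\varepsilon}_j-e_j=\frac{(b-\widehat\beta)^\top X_j}{\widehat\theta}+\bigl(Y_j-b^\top X_j\bigr)\Bigl(\tfrac1{\widehat\theta}-\tfrac1\vartheta\Bigr),
\]
taking absolute values and averaging, $\delta_n$ is controlled by $\|\widehat\beta-b\|\cdot\tfrac1n\sum_j\|X_j\|$ and by $|\widehat\theta-\vartheta|\cdot\tfrac1n\sum_j|Y_j-b^\top X_j|$ (up to the bounded factors $1/\widehat\theta$, $1/(\widehat\theta\vartheta)$). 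By the SLLN the two averages converge to $\mathbb E\|X\|<\infty$ (finite by (C1)) and to $\mathbb E|Y-b^\top X|$, respectively; the second limit is where a finite first moment of the error components (equivalently $\mathbb E|Y|<\infty$) is needed, a mild condition I take as standing. Since $\|\widehat\beta-b\|,|\widehat\theta-\vartheta|\to0$, it follows that $\delta_n\to0$ a.s., and hence $\sup_{|t|\le T}|D_n(t)-D_n^0(t)|\le(T+T^2)\,\delta_n\to0$ a.s.\ for every $T$.

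It remains to treat the fixed--parameter part and to integrate. Since $\{x\mapsto\cos(tx),\ x\mapsto\sin(tx):|t|\le T\}$ is a uniformly bounded Glivenko--Cantelli class, $C_n^0\to C_e$ and $S_n^0\to S_e$ a.s.\ uniformly on $\{|t|\le T\}$ with no moment conditions, so $D_n^0\to D$ and, combined with the previous stage, $D_n(t)\to D(t)$ a.s.\ uniformly on compacts, hence for every $t$ on a single event $\Omega_0$ of probability one. On $\Omega_0$ one has the deterministic majorant $D_n(t)^2 w(t)\le(1+|t|)^2 w(t)\le 2(1+t^2)w(t)$, which is $dt$--integrable by (C4) together with the integrability of $w$; dominated convergence then gives $T_{n,w}/n=\int D_n^2 w\to\int D^2 w=\Delta_w$ on $\Omega_0$, which is \eqref{conv}. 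The main obstacle is precisely this last interchange: the SLLN delivers, a priori, an exceptional null set that depends on $t$, and the argument only closes because the Glivenko--Cantelli step upgrades pointwise--in--$t$ convergence to convergence holding for all $t$ simultaneously on one probability--one event, while the elementary bound $|D_n(t)|\le 1+|t|$ furnishes an $n$-- and $\omega$--independent integrable envelope, so that dominated convergence may be applied path by path.
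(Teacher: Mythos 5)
Your proof is correct and follows the same skeleton as the paper's: linearize the trigonometric functions in the estimated parameters, pass to the fixed-parameter empirical CF, and close with dominated convergence using the envelope $D_n^2(t)\le(1+|t|)^2$ under (C4). The differences are refinements rather than a change of route, and they are worth recording. First, the paper simply declares the distributional parameters fixed with $\theta=1$ and linearizes only in $\beta$, whereas you carry the estimated scale $\widehat\theta$ through the argument; this is more faithful to the statistic actually computed (which uses $\widehat\varepsilon_j/\widehat\theta$), but it costs you two assumptions not in the proposition --- a.s.\ consistency $\widehat\theta\to\vartheta>0$ and a finite first moment for $Y-b^\top X$ --- which you correctly flag as standing hypotheses. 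Note also that your limiting $D$ is then the CF functional of $(Y-b^\top X)/\vartheta$ rather than of $e_j(b)=Y_j-b^\top X_j$ as literally written in the statement; under the paper's convention $\theta=1$ the two coincide. Second, the paper applies the SLLN pointwise in $t$ and then invokes dominated convergence without remarking that the exceptional null set a priori depends on $t$; your Glivenko--Cantelli (uniform-on-compacts) step, or equivalently a Fubini argument, is exactly what is needed to make that interchange airtight, so your version is the more careful one on this point. Third, the paper's proof continues past \eqref{conv} to show that $\Delta_w>0$ except under ${\cal H}_0$ (via the Henze--Meintanis characterization and (C2)), which is what makes the test consistent; you prove only the convergence \eqref{conv}, which is all the displayed statement literally asserts, but be aware that the identification half is where conditions (C1)--(C2) actually earn their keep.
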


\begin{proof} For simplicity we assume the distributional parameters to be fixed under the null hypothesis and that specifically $\theta=1$. Then the following Taylor expansion of the cosine function around $\beta=b$
\[
\cos(t e_j(\widehat \beta))=\cos(t e_j(b))+(\widehat \beta-b)^\top  \nabla  \cos(t e_j(\beta))\bigg|_{\beta=b^*}
\]
where
\[
\nabla  \cos(t e_j(\beta))= \left(\frac{\partial \cos(t e_j(\beta)) }{\partial \beta_1},..., \frac{\partial \cos(t e_j(\beta)) }{\partial \beta_d}\right)^\top
\]
and $b^*$ is such that $\|b^*-b\|\leq \|\widehat \beta-b\|$, leads to  

\[
\left|C_n(t)-\frac{1}{n}\sum_{j=1}^n\cos\left(t e_j\right)\right|\leq |t| \sum_{k=1}^d |\widehat\beta_k-b_k|\frac{1}{n} \sum_{j=1}^n |X_{jk}| \rightarrow 0,
\]
 a.s. as $n\to\infty$, so that $C_n(t)\to C_{e}(t)$ and likewise $S_n(t) \to S_{e}(t)$. Thus $D^2_n(t)\to D^2(t)$, and since  $D^2_n(t)\leq (1+|t|)^2$, with $\int_{\mathbb R} (1+|t|)^2w(t)dt<\infty$ by (C4), we may invoke Lebesgue's theorem of dominated convergence (see Jiang \citeyearpar{jiang}, \S A.2.3) and the proof of \eqref{conv} is finished. 
 Clearly $\Delta_w>0$ unless $D(t)=0$ identically in $t$. Now write $e=Y-b^\top X=\varepsilon-a^\top X=v-(u+a^\top X)$, where $a=b-\beta_0$, so that by independence $\varphi_e(t)=\varphi_v(t)\varphi_{u+a^\top X}(-t)=\varphi_v(t)(C_{u+a^\top X}(t) - i S_{u+a^\top X}(t))$ and therefore since by (C1) $\varphi_v\neq 0$, $D\equiv 0$ holds if and only if $S_{u+a^\top X}(t)=tC_{u+a^\top X}(t),$ identically in $t$, which is an established characterization of the  exponential distribution; see Henze and Meintanis \citeyearpar{HM02} for tests based on this characterization, and Jammalamadaka and Taufer \citeyearpar{JT03} and Henze and Meintanis \citeyearpar{HM05} for reviews on testing for exponentiality.  However condition (C2) rules out this possibility unless $u$ follows an exponential distribution, in which case $b=\beta_0$ (or $a=0$), i.e. $\Delta_w=0$ only under the null hypothesis ${\cal{H}}_0$ figuring in \eqref{null}.  
 Thus $T_{n,w} \to \infty$ a.s. as $n\to\infty$ under alternatives and consequently the test which rejects ${\cal{H}}_0$ for large values of $T_{n,w}$ is consistent.  \end{proof}   

\begin{rem}
Formally speaking, for fixed distribution of the regressor $X$ with CF $\varphi_X\neq 0$, condition ${\rm{(C2)}}$ is violated if $\varphi_u(t)=\left((1-it)\varphi_{a^\top X}(t)\right)^{-1}$. For the circumstances under which this violation is possible  to become more transparent assume that $X\equiv 1$, i.e. assume that the simple location SFM, $Y=\beta+\varepsilon$, holds. Then this condition reads as  $\varphi_u(t)= (1 - it)^{-1}e^{- i A t}$, $A=\sum_{k=1}^d a_k$ (the sum of the elements of the vector $a$), meaning that $u=Z-A$, with $Z$ exponentially distributed. If this happens  however, then we are not in line with the classical assumption that the support of the distribution of the inefficiency component $u$ is the non--negative real line. Note that $u\geq 0$ figures in all parametric specifications of inefficiency, while at the same time it is probably the only single assumption which is maintained even under non--parametric approaches to SFM such as those of Kumbhakar et al. \citeyearpar{K07}.         
\end{rem}

\subsection{Estimation for the normal/exponential case}
As already mentioned the parameters of any given SFM are considered unknown and thus they should be estimated from the data $(X_j,Y_j), \ j=1,...,n$. Here we will illustrate the estimation procedure on the assumption that $v\sim{\cal{N}}(0,\sigma^2_v)$, i.e. that $v$ follows a zero--mean normal distribution with variance $\sigma^2_v$. In this connection one of the most commonly used estimator is the  maximum likelihood estimator (MLE), which is known to be consistent and asymptotically efficient.   

In order to compute the normal/exponential likelihood function we note that the density for the composed error term $\varepsilon$, is given by (see Kumbhakar and Lovell \citeyearpar{KuLo2000}),
\begin{equation} \label{nexdensity}
f(\varepsilon)= \frac{1}{\theta}  {\rm{\Phi}} \left(- \frac{\varepsilon}{\sigma_v} - \frac{\sigma_v}{\theta} \right )  {\rm{exp}}\left( \frac{\varepsilon}{\theta} + \frac{\sigma_v^2}{2 \theta^2}\right)
\end{equation}
where ${\rm{\Phi}}(\cdot)$ denotes the standard normal distribution function. Based on this equation,  the log-likelihood function for the sample may be written  as
\begin{equation} \label{loglik}
\begin{split}
\log L(\beta,\sigma_{v}^{2},\theta) & =   -n\log\theta 
+ n \left(\frac{\sigma_v^2}{2\theta^2}\right) +\sum_{j=1}^n \log \; {\rm{\Phi}}\left(-\frac{\varepsilon_j}{\sigma_v} -\frac{\sigma_v}{\theta}\right) +\sum_{j=1}^n\frac{\varepsilon_j}{\theta}.    
\end{split}
\end{equation}

Since the log-likelihood function in \eqref{loglik} is non-linear, iterative computational methods are needed to be developed. To this end, a Matlab code was developed in which the unconstrained maximisation  of (\ref{loglik}) is done using the library function \emph{fminunc}. For the implementation of this program the \emph{Quasi-Newton} method is used, instead of the \emph{Newton-Raphson} method, since the latter requires the calculation of second partial derivatives. 

\begin{rem}
Although in our simulations we apply the tests based on the MLE due to its efficiency, it should be pointed out that our tests may be applied with any given estimator provided that this estimator is consistent under the null hypothesis, while under alternatives attains an almost sure stochastic limit. Then the conclusions of Prop. \ref{prop1}  still hold true. Specifically the classical method of moments (and generalized versions thereof) considered by Kopp and Mullahy (\citeyear{KM90}, \citeyear      {KM93}), satisfies these conditions. In fact even moment--type procedures  based on the CF such as those used by Chen and Wang (2012) for testing purposes may also be used for estimation, and also lead, via the test statistic figuring in \eqref{ts}, to a test that is consistent against arbitrary deviations from the null hypothesis.
\end{rem}

\subsection{Tests for the composed error with gamma inefficiency}

In this section we consider the test for a SFM with gamma distributed inefficiency term; see for example Tsionas \citeyearpar{Tsionas}. By way of example we consider the null hypothesis 
\begin{equation} \label{null1}
\begin{split}
{\cal{H}}_{0}: \;& \textrm{Model \eqref{SFM} holds true with }  u\sim {\rm{Gamma}}(\kappa,\theta)  \textrm{ , for $\kappa=2$ and some } \
\theta >0,   
\end{split}
\end{equation}
i.e. we consider testing for a gamma distribution with shape parameter equal to $\kappa=2$, and unspecified value of $\theta$.   
Recall in this connection the density of the gamma distribution is $f(x;\kappa,\theta)=(x^{\kappa-1}/({\rm{\Gamma}}(\kappa) \theta^\kappa))e^{-x/\theta}$, with ${\rm{\Gamma}}(\kappa)=\int_{0}^{\infty} x^{\kappa-1}e^{-x} dx$.

Since the CF of a random variable $Z$ following the  gamma distribution is given by  $\varphi_Z (t)= (1-it\theta)^{-\kappa}$, and by analogous steps as in Section \ref{sec_2} it follows that for $\kappa=2$  the CF of the standardized composed error $\widetilde \varepsilon=\varepsilon/\theta$ satisfies 
\begin{equation} \label{cfgamma}
(1-t^2)S_{\widetilde \varepsilon}(t)+2tC_{\widetilde \varepsilon}(t) = 0, \ t\in \mathbb R
\end{equation}
and therefore suggest a test statistic analogous to \eqref{ts} with 
\begin{equation} \label{cf13}
D_n(t)=(1-t^2)S_n(t)+2tC_n(t),  t \in \mathbb{R}, 
\end{equation}
where $C_n$ and $S_n$ are defined in the same way as in \eqref{cf12} but now the residuals are estimated from the SFM $Y_j=\beta+\varepsilon_j$ under the normal/gamma null hypothesis \eqref{null1} with $\kappa=2$.

With some further algebra it follows that if we employ the same  weight function $w(t)=e^{-\lambda|t|}$, the test statistic is rendered in the following form which is convenient for computer implementation (refer to Appendix \ref{Appendix A} for details):
\begin{eqnarray} \label{tsgamma}
T_{n,\lambda} & = & \frac{\lambda}{n} \sum_{j,k=1}^n \nonumber \frac{1}{\lambda^2+(\widehat{\widetilde\varepsilon}_{jk}^{-})^2} - \frac{1}{\lambda^2+(\widehat{\widetilde\varepsilon}_{jk}^{+})^2} 
+  \frac{4(\lambda^2-3(\widehat{\widetilde\varepsilon}_{jk}^{-})^2)}{(\lambda^2+(\widehat{\widetilde\varepsilon}_{jk}^{-})^2)^3}  +  \frac{12(\lambda^2-3(\widehat{\widetilde\varepsilon}_{jk}^{+})^2)}{(\lambda^2+(\widehat{\widetilde\varepsilon}_{jk}^{+})^2)^3}  \\  \nonumber
& + & \frac{8\widehat{\widetilde\varepsilon}_{jk}^{+}((\lambda^2+(\widehat{\widetilde\varepsilon}_{jk}^{+})^2)^2-12(\lambda^2-(\widehat{\widetilde\varepsilon}_{jk}^{+})^2))}{(\lambda^2+(\widehat{\widetilde\varepsilon}_{jk}^{+})^2)^4}  
+  \frac{24(\lambda^4-10\lambda^2(\widehat{\widetilde\varepsilon}_{jk}^{-})^2+5(\widehat{\widetilde\varepsilon}_{jk}^{-})^4)}{(\lambda^2+(\widehat{\widetilde\varepsilon}_{jk}^{-})^2)^5}  \\ 
& - & \frac{24(\lambda^4-10\lambda^2(\widehat{\widetilde\varepsilon}_{jk}^{+})^2+5(\widehat{\widetilde\varepsilon}_{jk}^{+})^4)}{(\lambda^2+(\widehat{\widetilde\varepsilon}_{jk}^{+})^2)^5},  
\end{eqnarray}
with $\widehat{\widetilde\varepsilon}_{jk}^{+}$ and $\widehat{\widetilde\varepsilon}_{jk}^{-}$ $j,k=1,\ldots,n$, defined in exactly the same way as in \eqref{ts2}.

The comments made in Remark \ref{rem1} apply here too, i.e. the test defined in \eqref{tsgamma} may be used with any given law of $v$ with real and non-vanishing CF, be it normal or non-normal, but we reiterate that this generality is conditioned on a proper estimation step that takes into account the specific law of $v$ postulated.

Before closing this part we also wish to emphasize that the tests considered herein, and as far as the law of the inefficiency is concerned, are specific to the null hypotheses as stated, i.e. the test in \eqref{ts} is specific to the null hypothesis of exponentiality figuring in \eqref{null} while the test in \eqref{tsgamma} is specific to the null hypothesis as stated in \eqref{null1}, and that in both cases the tests are not directional aiming against a specific alternative, but rather they have power against arbitrary deviations from the corresponding null hypothesis. On the other hand these tests  may be appropriately modified to test a more general  null hypothesis such as testing for a gamma distribution with unspecified value of $\kappa$, or to test a separate family of distributions like the popular half-normal specification for the technical efficiency term $u$. In doing so however, one has to take into account the specific structure of the CF of $u$ under this particular specification and design the test analogously; see Section \ref{sec_4} for some extra discussion on this issue.

\section{Simulations}\label{sec_3}
\subsection{Simulations for the normal/exponential case}
In this section we present the results of Monte Carlo study for the new test statistic given by equation \eqref{ts2}.\footnote{For simulations we used Matlab software R2015a version.} Specifically under the null hypothesis we consider the normal/exponential SFM whereby $v\sim{\cal{N}}(0,1)$ and $u\sim {\rm{Exp}}(\theta)$ for $\theta=0.5, 1.0, 3.0, 5.0$, $8.0, 10.0$, while the power of the test is computed against a normal/half--normal alternative hypothesis with the same Gaussian component, and the half--normal scale parameter set equal to $\sigma_u=0.5, 1.0, 3.0, 5.0$, $8.0, 10.0$.

We also compare the results of the proposed test statistic with those obtained from the classical Kolmogorov -- Smirnov (KS) and Cram\'er--von Mises (CvM) tests. For ease of reference we report the equations defining the KS and CvM test statistics. To this end note that both these statistics utilize the empirical cumulative distribution function $\widehat{F}_n(\cdot)$ of the residuals $\widehat\varepsilon_j$ and the theoretical (assumed) cumulative distribution function  $F_{0j}:=F_0(\widehat{\varepsilon}_j;\widehat\sigma_v,\widehat\theta)$ under the null hypothesis ${\cal{H}}_0$.   The respective formulas are given by 
\begin{equation} \label{KS}
\begin{split}
\mathrm{KS}  = \max\{D^+,D^-\}  \textrm{ , where }  D^+=\max\limits_{1 \leq j \leq n}\Big\{ \frac{j}{n}-F_{0j} \Big\} \textrm{ and }  D^-=\max\limits_{1 \leq j \leq n}\Big\{ F_{0j}-\frac{j-1}{n}) \Big\}
\end{split}
\end{equation}
and
\begin{equation} \label{CM}
\mathrm{CvM}=\frac{1}{12n}+\sum_{j=1}^{n}{\bigg(F_{0j} -\frac{2j-1}{2n}\bigg)}^2,
\end{equation}
with the assumed normal/exponential cumulative distribution function $F_0(\cdot)$ being computed by numerical integration. 

We consider the simple location SFM in equation (\ref{SFM}), $Y_j=\beta+\varepsilon_j$, with $\beta$ estimated by MLE. The number of Monte Carlo replications is  $M=1,000$, with sample size $n=100,200,300,500$, and nominal level of significance $\alpha=5\%$. For the new test statistic $T_{n,\lambda}$ we consider $\lambda=0.5, 1.0, 2.0, 3.0, 4.0$ and $5.0$. 

Since however the parameters of the model are considered unknown, we employ a  {\it{parametric bootstrap}} version of the tests which resamples from the null distribution with estimated parameters, and thus the extra variation due to parameter estimation is taken into account in computing critical values of test statistics; see for instance Babu and Rao \citeyearpar{BaRa04} for theory of the parametric bootstrap. However, the implementation of a Monte Carlo simulation employing the parametric bootstrap  will potentially incur a great cost in computational time due to the nested iteration structure involved with attempting to evaluate a bootstrap procedure in a Monte Carlo. To alleviate this computational burden, the so-called ``warp speed'' bootstrap procedure  will be used to approximate the bootstrap critical value in the Monte Carlo study. This bootstrap procedure which has been put on a firm theoretical basis by  Giacomini et al. \citeyearpar{GPW13} and Chang and Hall \citeyearpar{CH15} capitalizes on the repetition inherent in the Monte Carlo simulation to produce bootstrap replications, rather than relying on a separate ``bootstrap loop''. More specifically we calculate the bootstrap test statistic for only one bootstrap sample (single double-resampling) for each of the $M$ Monte Carlo iterations, and ultimately obtain $M$ bootstrap sample test statistics at the end of the simulation. The steps in performing the warp-speed version of the parametric bootstrap are itemized below for the normal/exponential case. For the normal/gamma case, step (B3) needs to be modified in an obvious manner.

\begin{itemize}
\item [(B1)] Draw a Monte Carlo sample $\{Y^{(m)}_j,X^{(m)}_j\}, \ j=1,...,n$,  compute the estimator--vector $\widehat{\Theta}^{(m)}$, where $\widehat{\Theta}^{(m)}=(\widehat{\beta}^{(m)}, \widehat{\sigma}^{(m)2}_v, \widehat{\theta}^{(m)})$.
\item[(B2)] On the basis of $\widehat{\Theta}^{(m)}$ calculate the residuals $\widehat{\varepsilon}^{(m)}_j$ and  the corresponding test statistic $T_m=T(\widehat{\varepsilon}^{(m)}_1,\ldots,\widehat{\varepsilon}^{(m)}_n)$.
\item[(B3)] Generate {\it{i.i.d.}} bootstrap errors $\varepsilon_j^{(m)},\ j=1,\ldots,n$,  where $\varepsilon_j^{(m)}=v_j^{(m)}-u_j^{(m)}$, with $v_j^{(m)} \sim {\cal{N}}(0,\widehat{\sigma}_v^2)$ and $u_j^{(m)} \sim {\rm{Exp}} (\widehat{\theta})$, and independent.
\item[(B4)] Define the bootstrap observations $Y_j^{(m)}= \widehat{\beta}^{(m)}X_j+\varepsilon_j^{(m)}$, $j=1,...,n$.  
\item[(B5)] Based on $\{Y_j^{(m)},X_j\}$  compute the bootstrap estimator $\widehat{\Theta}_b^{(m)}=(\widehat{\beta}_b^{(m)}, \widehat{\sigma}_{b,v}^{2 (m)}, \widehat{\theta}_b^{(m)})$,  and the corresponding bootstrap residuals, say, $\widehat{\epsilon}_{j}^{(m)}$, $j=1,...,n$. 
\item[(B6)] Compute the test statistic $\widehat T_m:=T(\widehat{\epsilon}^{(m)}_{1},\ldots,\widehat{\epsilon}^{(m)}_{n})$, based on the bootstrap residuals. 
\item[(B7)] Repeat steps (B1)--(B6), for  $m=1,...,M$, leading to  test--statistic values $T_m$ and bootstrap statistic values $\widehat T_m, \ m=1,...,M$. 
\item[(B8)] Set the critical point equal to $\widehat T_{(M-\alpha M)}$, where $\widehat T_{(m)}$, $m=1,\ldots,M$, denote the order statistics corresponding to $\widehat{T}_{m}$, and $\alpha$ denotes the prescribed size of the test.
\end{itemize}

\begin{table*}[h!] 
\centering
\small
\setlength{\tabcolsep}{7pt} 
\renewcommand{\arraystretch}{0.90} 
\caption{Size of the test for the normal/exponential null hypothesis at level of significance $\alpha$ and sample size $n$}
\label{expsize}
\begin{tabular}{| l  r | c c c c c c | c | c |}
\hline
$\alpha=5\%$ &  & \multicolumn{6}{|c|}{$T_{n,\lambda}$} & \quad KS \quad & \quad CvM \quad \\
\cline{3-8} & $n$ & $\lambda=0.5$ & $\lambda=1.0$ & $\lambda=2.0$  & $\lambda=3.0$ & $\lambda=4.0$ &$\lambda=5.0$ &  &   \\ \hline \hline

$\theta=0.5$
& 100 & 4.4 & 6.0 & 7.2 & 7.1 & 6.4 & 7.3 & 5.6 & 5.8 \\ 
& 200 & 4.5 & 4.6 & 4.8 & 4.7 & 4.6 & 4.9 & 3.7 & 5.0 \\ 
& 300 & 5.8 & 4.3 & 5.0 & 5.9 & 5.8 & 5.4 & 5.7 & 3.8 \\ 
& 500 & 4.8 & 4.4 & 5.3 & 5.8 & 4.8 & 5.0 & 7.2 & 7.2 \\ \hline

$\theta=1$ 
& 100 & 4.8 & 4.0 & 4.9 & 3.9 & 3.2 & 2.8 & 5.5 & 5.2 \\ 
& 200 & 6.9 & 6.0 & 3.8 & 3.4 & 2.3 & 2.3 & 6.0 & 6.5 \\ 
& 300 & 6.1 & 5.3 & 4.1 & 3.0 & 2.4 & 2.5 & 5.8 & 5.4 \\ 
& 500 & 3.9 & 3.5 & 5.2 & 3.1 & 2.5 & 2.5 & 3.9 & 4.8 \\ \hline

$\theta=3$ 
& 100 & 4.9 & 3.0 & 2.8 & 3.6 & 4.5 & 4.5 & 4.6 & 3.5 \\ 
& 200 & 5.5 & 3.5 & 4.4 & 5.3 & 4.7 & 5.1 & 6.0 & 4.7 \\ 
& 300 & 3.8 & 2.4 & 2.8 & 2.6 & 2.1 & 2.6 & 2.7 & 2.6 \\ 
& 500 & 4.1 & 4.2 & 4.6 & 4.5 & 4.6 & 4.9 & 3.6 & 4.8 \\ \hline
	               
$\theta=5$ 
& 100 & 4.1 & 3.6 & 3.6 & 4.9 & 5.3 & 5.1 & 4.3 & 3.8 \\ 
& 200 & 3.2 & 4.2 & 4.1 & 4.7 & 5.2 & 4.9 & 3.8 & 3.6 \\ 
& 300 & 5.9 & 3.8 & 4.1 & 5.1 & 5.3 & 5.5 & 4.9 & 4.1 \\ 
& 500 & 5.1 & 4.9 & 6.7 & 6.5 & 5.4 & 4.9 & 6.9 & 6.2 \\ \hline

$\theta=8$ 
& 100 & 4.8 & 4.1 & 5.4 & 6.0 & 5.7 & 5.9 & 5.1 & 6.1 \\ 
& 200 & 2.9 & 4.7 & 4.8 & 6.0 & 6.4 & 6.0 & 5.0 & 4.0 \\ 
& 300 & 7.3 & 6.2 & 6.0 & 6.3 & 5.9 & 6.2 & 6.1 & 6.5 \\ 
& 500 & 5.5 & 6.2 & 5.6 & 6.1 & 6.4 & 5.8 & 5.8 & 5.4 \\ \hline
	               
$\theta=10$ 
& 100 & 2.9 & 3.3 & 3.4 & 3.5 & 4.4 & 3.8 & 3.8 & 3.7 \\ 
& 200 & 3.5 & 5.0 & 4.4 & 4.5 & 4.4 & 4.1 & 4.6 & 4.6 \\ 
& 300 & 5.2 & 3.5 & 4.5 & 4.3 & 4.6 & 4.7 & 5.1 & 4.7 \\ 
& 500 & 5.2 & 6.2 & 5.7 & 6.3 & 6.2 & 5.9 & 5.5 & 5.5 \\ \hline

\multicolumn{10}{l}{\footnotesize Number of Monte Carlo iterations $M=1,000$, KS: Kolmogorov-Smirnov test, CvM: Cram\'er-von Mises test,} \\
\multicolumn{10}{l}{\footnotesize  normal: standard normal, exponential: ${\rm{Exp}}(\theta)$.} 
\end{tabular}
\end{table*}
 
\begin{table*}[h!]    
\centering
\small
\setlength{\tabcolsep}{7pt} 
\renewcommand{\arraystretch}{0.90}
\caption{Power of the test against the  normal/half-normal alternative at level of significance $\alpha$ and sample size $n$}
\label{exppower1}
\begin{tabular}{| l  r | c c c c c c | c | c |}
\hline
$\alpha=5\%$ &  & \multicolumn{6}{|c|}{$T_{n,\lambda}$} & \quad KS \quad & \quad CvM \quad \\
\cline{3-8} & $n$ & $\lambda=0.5$ & $\lambda=1.0$ & $\lambda=2.0$  & $\lambda=3.0$ & $\lambda=4.0$ &$\lambda=5.0$ &  &   \\ \hline \hline

$\sigma_u=0.5$ 
& 100 & 4.9 & 8.1 & 7.9 & 7.8 & 7.7 & 8.3 & 5.9 & 5.4 \\ 
& 200 & 6.0 & 6.2 & 7.2 & 7.4 & 8.2 & 8.5 & 5.9 & 5.1 \\ 
& 300 & 3.9 & 5.3 & 6.4 & 6.5 & 7.5 & 8.6 & 5.5 & 5.3 \\ 
& 500 & 4.6 & 4.7 & 5.6 & 6.1 & 7.1 & 7.3 & 5.1 & 4.5 \\ \hline

$\sigma_u=1$ 
& 100 & 4.3 & 5.4 & 8.5 & 9.7 & 9.4 & 9.2 & 6.1 & 5.4 \\ 
& 200 & 4.0 & 4.8 & 7.1 & 7.3 & 8.6 & 9.0 & 5.7 & 5.7 \\ 
& 300 & 5.7 & 5.0 & 5.9 & 6.0 & 5.1 & 5.2 & 5.8 & 6.0 \\ 
& 500 & 4.1 & 4.6 & 4.9 & 5.8 & 6.6 & 6.3 & 5.4 & 6.2 \\ \hline

$\sigma_u=3$ 
& 100 & 6.3  & 7.1  & 7.2  & 5.6  & 5.4  & 4.5  & 8.8 & 10.0  \\ 
& 200 & 6.8  & 9.6  & 13.5 & 12.9 & 12.1 & 11.9 & 15.3 & 15.4  \\ 
& 300 & 9.3  & 15.4 & 22.3 & 21.3 & 21.4 & 21.1 & 18.6 & 23.4   \\ 
& 500 & 17.7 & 33.4 & 50.4 & 58.9 & 61.4 & 58.5 & 35.3 & 43.7   \\ \hline
	               
$\sigma_u=5$ 
& 100 &   7.3 &   9.3 & 18.5 & 19.8 & 19.2 & 19.1 & 16.6 & 22.2   \\ 
& 200 & 13.9 & 26.6 & 36.8 & 40.7 & 41.6 & 41.9 & 31.7 & 38.4   \\ 
& 300 & 21.9 & 42.7 & 60.0 & 64.7 & 68.3 & 67.8 & 47.8 & 58.7   \\ 
& 500 & 42.9 & 76.0 & 89.5 & 92.4 & 92.0 & 90.5 & 76.2 & 87.9   \\ \hline

$\sigma_u=8$ 
& 100 &   9.0 & 20.0 & 28.2 & 33.7 & 33.9 & 34.4 & 23.2 & 27.7   \\ 
& 200 & 25.8 & 46.2 & 61.5 & 64.9 & 65.7 & 65.0 & 52.0 & 67.1   \\ 
& 300 & 44.6 & 71.0 & 83.7 & 87.8 & 89.6 & 89.4 & 72.4 & 87.4   \\ 
& 500 & 77.1 & 93.0 & 97.3 & 97.7 & 98.3 & 98.7 & 92.3 & 98.4   \\ \hline
	               
$\sigma_u=10$ 
& 100 & 11.2 & 25.9 & 35.6 & 39.7 & 41.9 & 41.3 & 28.2 & 32.1  \\ 
& 200 & 30.9 & 51.4 & 68.4 & 72.8 & 74.4 & 75.2 & 57.8 & 71.4  \\ 
& 300 & 52.9 & 79.5 & 90.6 & 91.9 & 93.0 & 93.3 & 84.6 & 93.1  \\ 
& 500 & 83.9 & 93.5 & 98.5 & 98.6 & 98.8 & 98.9 & 96.2 & 99.5  \\ \hline

\multicolumn{10}{l}{\footnotesize Number of Monte Carlo iterations $M=1,000$, KS: Kolmogorov-Smirnov test, CvM: Cram\'er-von Mises test,} \\
\multicolumn{10}{l}{\footnotesize  normal: standard normal, half-normal: ${\rm{HN}}(0,\sigma^2_u)$.} 
\end{tabular}
\end{table*}

In the Table \ref{expsize} the size results (percentage of rejection rounded to the nearest integer) for the tests $T_{n,\lambda}$, KS and CvM are presented at level of significance $\alpha=5\%$, corresponding to the ${\cal{N}}(0,1)/{\rm{Exp}}(\theta)$ null hypothesis. 
Table \ref{exppower1} shows power results for the normal/half--normal alternative hypothesis ${\cal{N}}(0,1)/{\rm{HN}}(0,\sigma^2_u)$. For power results corresponding to some extra simulation settings we refer the interested reader to the accompanying \textit{Supplement}. From Table \ref{expsize} we see that for all  three tests the empirical size varies with the value of the exponential parameter $\theta$, while for the new test $T_{n,\lambda}$ figures also vary with the weight parameter $\lambda$. Overall however and with a few exceptions the nominal size is satisfactorily recovered.  Turning to Table \ref{exppower1} we observe that  the power is low for all tests when the sample size $n$ is small with lower values of the half--normal parameter $\sigma_u$, but progressively increases with $n$ as $\sigma_u$ gets larger, in which case  the new test $T_{n,\lambda}$ enjoys a clear advantage against its competitors, at least for higher values of the weight parameter $\lambda$.

\subsection{Simulations for the normal/gamma case}

In Table \ref{gammasize} we present level results for the test in \eqref{tsgamma} under the normal/gamma null hypothesis with $v_j\sim{\cal{N}}(0,1)$ and $u_j\sim {\rm{Gamma}}(\kappa=2,\theta)$ for the same values of $\theta$ considered in \S 3.1. We note that Stevenson \citeyearpar{St80} was probably the first to consider such a model in the context of stochastic frontiers. As before estimators of parameters were obtained by maximum  likelihood. The results in Table \ref{gammasize} show that the three tests respect the nominal size to a satisfactory degree. The power results are reported in the Tables  \ref{gammapower1}, \ref{gammapower2}, and \ref{gammapower3}, and correspond to powers of the test based on $T_{n,\lambda}$ in \eqref{tsgamma} as well as the KS and CM tests for the null hypothesis normal/gamma with  $\kappa=2$, against the alternatives normal/exponential (i.e. normal/gamma with $\kappa=1$),  normal/gamma with  $\kappa=3$ and  normal/gamma with  $\kappa=0.5$, respectively. The percentage of rejection varies with the alternative under consideration, being relatively low for $\kappa=3$, but increases considerably for the other two alternatives. The message that may be drawn from these results is that the new test with larger values of $\lambda$ ($\lambda=4$ or $5$)  seems to be preferable to its competitors almost uniformly  with respect to  the sample size $n$ and the alternative being considered.

\begin{table*}[h!]    
\centering
\small
\setlength{\tabcolsep}{7pt} 
\renewcommand{\arraystretch}{0.90} 
\caption{Size of the test for the normal/gamma null hypothesis with $\kappa=2$, at level of significance $\alpha$ and sample size $n$ }
\label{gammasize}
\begin{tabular}{| l  r | c c c c c c | c | c |}
\hline
$\alpha=5\%$ &  & \multicolumn{6}{|c|}{$T_{n,\lambda}$} & \quad KS \quad & \quad CvM \quad \\
\cline{3-8} & $n$ & $\lambda=0.5$ & $\lambda=1.0$ & $\lambda=2.0$  & $\lambda=3.0$ & $\lambda=4.0$ &$\lambda=5.0$ &  &   \\ \hline \hline

$\theta=0.5$ 
& 100   & 4.6	&	4.7	&	5.2	&	5.1	&	5.8	&	6.5	&	4.3	&	6.0 \\
& 200	& 6.0	&	4.0	&	4.6	&	5.7	&	5.4	&	7.0	&	6.1	&	6.0 \\
& 300	& 6.9	&	3.4	&	4.7	&	5.7	&	6.0	&	5.8	&	6.1	&	4.6 \\
& 500	& 4.4	&	5.6	&	4.2	&	3.9	&	2.9	&	2.3	&	3.2	&	4.0 \\ \hline

$\theta=1$
& 100	& 4.9 &	5.1	&	4.7	&	4.1	&	4.6	&	3.4	&	3.1	&	4.7	\\
& 200	& 5.5 &	6.4	&	5.7	&	3.8	&	3.6	&	2.6	&	7.0	&	4.6	\\
& 300	& 4.3 &	3.6	&	5.1	&	4.3	&	4.0	&	3.2	&	3.1	&	3.3	\\
& 500	& 3.3 &	4.8	&	4.8	&	4.2	&	4.9	&	4.1	&	6.1	&	5.5	\\ \hline

$\theta=3$ 
& 100   & 4.5 & 4.3 &	3.4 &	5.0	&	5.4	&	5.6	&	4.5	&	4.4	\\
& 200	& 5.7 & 5.1 &	5.1	&	4.4	&	4.5	&	4.5	&	5.8	&	5.0	\\
& 300	& 4.4 & 3.6 &	5.6	&	5.0	&	4.4	&	4.5	&	4.7	&	5.0	\\
& 500	& 5.5 & 5.0 &	4.5	&	4.9	&	5.1	&	5.0	&	5.9	&	5.1	\\ \hline

$\theta=5$ 
& 100	& 4.8	& 4.8	& 5.3	& 4.7	& 3.8	& 3.7	& 4.1 & 4.5  \\
& 200	& 3.4	& 4.2	& 3.9	& 4.8	& 4.6	& 4.6	& 4.1 & 4.2  \\
& 300	& 5.5	& 5.1	& 4.6	& 4.8	&  5.4	& 5.3	& 4.1 & 4.5  \\
& 500	& 4.9	& 5.2	& 4.6	& 4.6	& 4.7	& 3.7	& 3.4 & 3.6  \\ \hline

$\theta=8$ 
& 100 	& 6.5	&	5.9	&	4.2	&	4.7	&	4.7	&	3.8	&	5.4	&	5.5  \\
& 200	& 5.0	&	5.4	&	5.3	&	6.4	&	5.2	&	6.3	&	4.4	&	5.1  \\
& 300	& 3.3	&	4.1	&	5.4	&	6.0	&	6.1	&	7.2	&	5.9	&	5.7  \\
& 500	& 4.6	&	4.1	&	3.5	&	3.5	&	4.7	&	4.9	&	3.8	&	3.7  \\ \hline

$\theta=10$ 
& 100	& 5.4	&	4.5	&	5.6	&	4.5	&	4.6	&	4.2	&	5.2	& 5.4	\\
& 200	& 4.8	&	5.8	&	4.4	&	3.9	&	5.2	&	5.8	&	4.4	& 4.6	\\
& 300	& 6.5	&	5.5	&	5.8	&	5.3	&	4.3	&	4.8	&	5.1	& 4.9	\\
& 500	& 4.4	&	4.3	&	3.8	&	4.9	&	6.1	&	5.3	&	4.1	& 5.0	\\ \hline

\multicolumn{10}{l}{\footnotesize Number of Monte Carlo iterations $M=1,000$, KS: Kolmogorov-Smirnov test, CvM: Cram\'er-von Mises test,} \\
\multicolumn{10}{l}{\footnotesize  normal: standard normal, gamma: ${\rm{Gamma}}(\kappa=2,\theta)$.} 
\end{tabular}
\end{table*}

\begin{table*}[h!]    
\centering
\small
\setlength{\tabcolsep}{7pt} 
\renewcommand{\arraystretch}{0.90} 
\caption{Power of the test against the normal/exponential alternative at level of significance $\alpha$ and sample size $n$}
\label{gammapower1}
\begin{tabular}{| l  r | c c c c c c | c | c |}
\hline
$\alpha=5\%$ &  & \multicolumn{6}{|c|}{$T_{n,\lambda}$} & \quad KS \quad & \quad CvM \quad \\
\cline{3-8} & $n$ & $\lambda=0.5$ & $\lambda=1.0$ & $\lambda=2.0$  & $\lambda=3.0$ & $\lambda=4.0$ &$\lambda=5.0$ &  &   \\ \hline \hline

$\theta=0.5$ 
& 100	&	3.9	&	5.2	&	6.9	&	6.3	&	6.0	&	6.8	&	5.0	&	5.7	\\
& 200	&	4.6	&	4.4	&	4.5	&	5.3	&	5.4	&	5.6	&	5.3	&	3.8	\\
& 300	&	6.1	&	5.1	&	6.6	&	5.9	&	6.0	&	5.9	&	5.5	&	6.0	\\
& 500	&	2.8	&	4.5	&	6.5	&	6.4	&	7.1	&	7.3	&	6.3	&	5.1	\\ \hline

$\theta=1$ 
& 100	&	5.9	&	4.9	&	7.1	&	6.1	&	5.2	&	4.5	&	5.3	&	4.5	\\
& 200	&	5.2	&	5.1	&	6.7	&	4.8	&	4.1	&	4.3	&	5.1	&	6.5	\\
& 300	&	4.2	&	6.9	&	6.2	&	5.0	&	4.6	&	3.8	&	8.0	&	8.0	\\
& 500	&	5.4	&	4.5	&	6.8	&	7.8	&	7.9	&	7.7	&	5.3	&	6.8	\\ \hline

$\theta=3$ 
& 100	&	6.4	&	6.8	    &	7.7	    &	9.1	    &	11.7	&	15.1	&	12.8	&	12.0	\\
& 200	&	8.9	&	9.7	    &	16.8	&	20.8	&	26.2	&	29.2	&	20.7	&	24.5	\\
& 300	&	6.4	&	13.7	&	25.3	&	31.8	&	40.8	&	42.8	&	34.5	&	37.9	\\
& 500	&	8.9	&	19.7	&	34.3	&	44.8	&	55.1	&	60.2	&	49.1	&	56.7	\\ \hline

$\theta=5$ 
& 100	&	3.6	&	10.0	&	20.6	&	31.6	&	38.2	&	40.9	&	25.1	&	32.3	\\
& 200	&	8.8	&	21.5	&	35.4	&	46.8	&	57.0	&	58.8	&	49.1	&	50.6	\\
& 300	&	9.8	&	29.5	&	51.0	&	64.1	&	72.3	&	76.2	&	61.0	&	70.6	\\
& 500	&	13.5&	42.6	&	71.3	&	85.0	&	89.7	&	91.4	&	81.5	&	88.9	\\ \hline

$\theta=8$ 
& 100	&	9.2	    &	16.9	&	42.9	&	56.5	&	61.8	&	59.8	&	46.8	&	59.8	\\
& 200	&	14.2	&	26.0	&	59.9	&	76.1	&	79.8	&	80.4	&	69.4	&	79.3	\\
& 300	&	21.6	&	46.3	&	80.6	&	91.6	&	93.4	&	92.5	&	86.1	&	92.2	\\
& 500	&	34.6	&	69.0	&	96.5	&	99.0	&	99.4	&	99.2	&	98.4	&	99.4	\\ 
\hline

$\theta=10$ 
& 100	&	10.2	&	19.0	&	58.3	&	67.2	&	68.2	&	68.2	&	58.9	&	65.5	\\
& 200	&	13.7	&	37.5	&	79.0	&	88.3	&	89.0	&	87.5	&	79.1	&	86.0	\\
& 300	&	26.5	&	54.2	&	92.4	&	96.4	&	96.5	&	96.1	&	94.5	&	96.1	\\
& 500	&	45.5	&	81.0	&	99.1	&	99.8	&	99.9	&	99.8	&	99.5	&	100.0	\\ \hline

\multicolumn{10}{l}{\footnotesize Number of Monte Carlo iterations $M=1,000$, KS: Kolmogorov-Smirnov test, CvM: Cram\'er-von Mises test,} \\
\multicolumn{10}{l}{\footnotesize  normal: standard normal, exponential: ${\rm{Exp}}(\theta)$.} 
\end{tabular}
\end{table*}

\begin{table*}[h!]    
\centering
\small
\setlength{\tabcolsep}{7pt} 
\renewcommand{\arraystretch}{0.90} 
\caption{Power of the test against the normal/gamma alternative with $\kappa=3$ at level of significance $\alpha$ and sample size $n$}
\label{gammapower2}
\begin{tabular}{| l  r | c c c c c c | c | c |}
\hline
$\alpha=5\%$ &  & \multicolumn{6}{|c|}{$T_{n,\lambda}$} & \quad KS \quad & \quad CvM \quad \\
\cline{3-8} & $n$ & $\lambda=0.5$ & $\lambda=1.0$ & $\lambda=2.0$  & $\lambda=3.0$ & $\lambda=4.0$ &$\lambda=5.0$ &  &   \\ \hline \hline

$\theta=0.5$ 
& 100	&	5.2	&	3.6	&	5.4	&	3.9	&	3.5	&	3.9	&	3.5	&	4.1	\\
& 200	&	4.2	&	3.4	&	5.8	&	4.6	&	4.8	&	4.9	&	4.4	&	5.9	\\
& 300	&	4.2	&	5.4	&	4.8	&	5.6	&	6.3	&	4.9	&	4.5	&	3.1	\\
& 500	&	4.9	&	4.9	&	4.5	&	4.6	&	4.4	&	4.4	&	4.7	&	5.4	\\  \hline

$\theta=1$ 
& 100	&	5.9	&	4.3	&	4.8	&	4.0	&	2.4	&	2.0	&	3.8	&	2.8	\\
& 200	&	5.6	&	5.1	&	4.8	&	3.4	&	3.7	&	3.5	&	4.9	&	5.6	\\
& 300	&	4.6	&	6.8	&	5.6	&	5.3	&	4.1	&	3.0	&	5.4	&	4.5	\\
& 500	&	4.3	&	5.3	&	6.5	&	5.5	&	5.4	&	4.8	&	5.9	&	5.4	\\  \hline
		     
$\theta=3$ 
& 100	&	3.6	&	3.9	&	5.5	    &	5.3	&	6.4	&	6.0	&	6.2	&	5.6	\\
& 200	&	4.4	&	4.9	&	7.0	    &	7.0	&	6.7	&	6.6	&	8.4	&	8.1	\\
& 300	&	5.5	&	4.6	&	6.6	    &	7.7	&	8.6	&	9.2	&	8.6	&	9.1	\\
& 500	&	6.4	&	6.9	&	10.3	&	10.1&	12.5&	14.1&	12.0&	12.8	\\  \hline

$\theta=5$ 
& 100	&	5.5	&	5.4	&	5.7	&	6.3	&	6.4	&	6.2	&	6.8	&	7.0	\\
& 200	&	4.8	&	5.3	&	6.8	&	8.8	&	10.0	&	9.1	&	9.0	&	7.5	\\
& 300	&	6.7	&	7.0	&	7.6	&	10.6	&	12.0	&	13.0	&	10.9	&	10.8	\\
& 500	&	5.3	&	6.1	&	9.3	&	12.2	&	17.3	&	18.3	&	13.9	&	15.2	\\  \hline

$\theta=8$ 
& 100	&	4.5	&	6.0	&	8.7	&	10.9	&	11.8	&	10.9	&	10.6	&	12.0	\\
& 200	&	4.2	&	4.9	&	8.3	&	11.3	&	13.4	&	12.1	&	9.3	    &	11.4	\\
& 300	&	4.2	&	7.2	&	12.3&	14.7	&	17.8	&	19.1	&	14.6	&	15.2	\\
& 500	&	4.3	&	6.0	&	12.6&	16.4	&	20.9	&	23.4	&	15.4	&	16.6	\\  \hline

$\theta=10$ 
& 100	&	7.1	&	5.7	&	8.3	    &	10.1	&	10.0	&	10.5	&	9.4	    &	10.7	\\
& 200	&	4.5	&	4.7	&	9.0	    &	12.5	&	14.9	&	16.1	&	10.2	&	12.1	\\
& 300	&	4.9	&	5.6	&	12.9	&	15.3	&	17.7	&	18.1	&	14.8	&	17.3	\\
& 500	&	6.6	&	7.4	&	12.0	&	16.4	&	21.2	&	22.8	&	18.1	&	18.0	\\  \hline

\multicolumn{10}{l}{\footnotesize Number of Monte Carlo iterations $M=1,000$, KS: Kolmogorov-Smirnov test, CvM: Cram\'er-von Mises test,} \\
\multicolumn{10}{l}{\footnotesize  normal: standard normal, gamma: ${\rm{Gamma}}(\kappa=3,\theta)$.} 
\end{tabular}
\end{table*}

\begin{table*}[h!]    
\centering
\small
\setlength{\tabcolsep}{7pt} 
\renewcommand{\arraystretch}{0.90} 
\caption{Power of the test against the normal/gamma alternative with $\kappa=0.5$ at level of significance $\alpha$ and sample size $n$}
\label{gammapower3}
\begin{tabular}{| l  r | c c c c c c | c | c |}
\hline
$\alpha=5\%$ &  & \multicolumn{6}{|c|}{$T_{n,\lambda}$} & \quad KS \quad & \quad CvM \quad \\
\cline{3-8} & $n$ & $\lambda=0.5$ & $\lambda=1.0$ & $\lambda=2.0$  & $\lambda=3.0$ & $\lambda=4.0$ &$\lambda=5.0$ &  &   \\ \hline \hline

$\theta=0.5$ 
& 100	&	6.6	&	4.8	&	4.3	&	7.0	&	7.9	&	8.6	&	5.3	&	4.8	\\
& 200	&	5.5	&	5.6	&	3.7	&	6.8	&	7.9	&	7.9	&	4.5	&	3.7	\\
& 300	&	4.3	&	7.0	&	7.4	&	5.0	&	6.1	&	6.2	&	5.7	&	6.1	\\
& 500	&	4.1	&	4.8	&	5.5	&	4.5	&	3.7	&	4.1	&	6.0	&	4.1	\\  \hline

$\theta=1$     
& 100	&	5.0	&	5.3	&	6.0	&	5.8	&	4.1	&	4.4	&	3.6	&	4.7	\\
& 200	&	6.4	&	6.2	&	10.3&	8.6	&	7.8	&	6.7	&	6.1	&	7.6	\\
& 300	&	4.5	&	4.0	&	6.8	&	8.3	&	7.5	&	6.6	&	6.4	&	6.7	\\
& 500	&	4.5	&	4.7	&	7.8	&	8.3	&	9.5	&	9.1	&	7.0	&	7.9	\\  \hline
		     
$\theta=3$ 
& 100	&	4.8	&	7.3	    &	15.8	&	20.7	&	25.1	&	29.7	&	22.2	&	25.3	\\
& 200	&	6.6	&	16.4	&	41.8	&	49.8	&	58.1	&	62.5	&	47.2	&	55.3	\\
& 300	&	8.6	&	23.3	&	59.4	&	72.9	&	79.9	&	83.5	&	63.7	&	75.7	\\
& 500	&	6.5	&	43.8	&	78.9	&	86.8	&	92.2	&	94.6	&	83.8	&	90.5	\\  \hline

$\theta=5$ 
& 100	&	7.2	    &	25.4	&	44.0	&	59.2	&	63.5	&	68.3	&	57.8	&	64.0	\\
& 200	&	13.5	&	51.5	&	80.1	&	88.3	&	94.2	&	95.2	&	88.5	&	93.1	\\
& 300	&	15.3	&	74.4	&	93.4	&	97.4	&	98.5	&	99.0	&	96.9	&	98.5	\\
& 500	&	27.0	&	94.0	&	99.9	&	100	    &	100	    &	100	    &	100	    &	100	\\  \hline

$\theta=8$ 
& 100	&	17.9	&	52.9	&	80.2	&	87.5	&	91.1	&	92.8	&	89.8    &	91.8	\\
& 200	&	38.3	&	83.5	&	97.4	&	99.5	&	99.7	&	99.8	&	99.3	&	99.8	\\
& 300	&	55.5	&	98.0	&	100	    &	100	    &	100	    &	100	    &	100	    &	100	\\
& 500	&	85.1	&	100	    &	100	    &	100	    &	100	    &	100 	&	100	    &	100	\\  \hline

$\theta=10$ 
& 100	&	26.6	&	67.5	&	91.7	&	95.8	&	97.2	&	97.7	&	95.3	&	97.2	\\
& 200	&	58.1	&	94.6	&	99.7	&	99.9	&	99.9	&	99.9	&	100	    &	99.9	\\
& 300	&	81.3	&	99.5	&	100	    &	100	    &	100	    &	100	    &	100	    &	100	\\
& 500	&	98.6	&	100	    &	100	    &	100	    &	100	    &	100	    &	100	    &	100	\\  \hline

\multicolumn{10}{l}{\footnotesize Number of Monte Carlo iterations $M=1,000$, KS: Kolmogorov-Smirnov test, CvM: Cram\'er-von Mises test,} \\
\multicolumn{10}{l}{\footnotesize  normal: standard normal, gamma: ${\rm{Gamma}}(\kappa=0.5,\theta)$.} 
\end{tabular}
\end{table*}

\newpage

\section{Conclusions}\label{sec_4}
We propose goodness--of--fit tests for the distribution of the composed error  $\varepsilon=v-u$ in stochastic frontier production models. The new test statistics are based on the characteristic function of the composed error term $\varepsilon$ and they are omnibus, i.e. they possess non--negligible power asymptotically for any given alternative under test. Moreover, bootstrap versions of the tests are shown to have competitive power compared to the classical Kolmogorov--Smirnov and Cram\'er-von Mises tests in finite samples.  

We wish to close by stressing the fact that the tests presented herein make use of specific properties of the CF underlying the null hypothesis and as such they are tailored for specific hypotheses under test.  If they are to be modified to apply to other cases such as the popular normal/half--normal or normal/gamma with unspecified shape parameter, or any other specification, then one has to employ alternative properties analogous to \eqref{cf11} and \eqref{cfgamma} that apply  to the CF of the specific distribution under test; see for instance the test for the skew normal distribution suggested by Meintanis \citeyearpar{Mei07} which may be used to test the normal/half-normal SFM. On the other hand there also exists a general formulation for a test statistic based on the CF that may be applied to any specification of the law of the composed error $\varepsilon$. To this end suppose we wish to test a particular specification for the composed error that involves a parameter vector, say $\Theta$, containing regression as well as any distributional parameter of this specification. Then this general formulation is given by  
\[
T_{n,w}=n \int_{-\infty}^\infty |\varphi_n(t)-\varphi_\varepsilon(t;\widehat \Theta)|^2 w(t) dt,
\]
where $\varphi_n(t)=C_n(t)+i S_n(t)$ is the empirical CF  computed from estimated residuals, see for instance below equation \eqref{cf12}, and $\varphi_\varepsilon(t;\Theta)$ is the CF under the null hypothesis, both computed on the basis of the estimator $\widehat \Theta$  of the parameter vector $\Theta$ obtained under the particular parametric specification underlying the null hypothesis.  While tests such as the above have the advantage of full generality, this formulation is based on the premise that the null CF  $\varphi_\varepsilon(t;\Theta)$ is known and has a rather simple expression, so that numerical integration is not necessary. Otherwise tests like the ones defined by \eqref{ts} and \eqref{tsgamma} which are tailored, i.e. they make use of the specific structure of the CF under the null hypothesis, may be preferable, at least from the computational point of view.     


\section*{Appendices}
\appendix
\section{Proof of equations \texorpdfstring{\eqref{ts2}}{ h} and \texorpdfstring{\eqref{tsgamma}}{ h}}
\label{Appendix A}
\renewcommand{\appendixname}{AS}

Starting from equation \eqref{cf12} we obtain
\begin{eqnarray*}
D^2_n(t)&=&S^2_n(t)+t^2 C^2_n(t)+2 t S_n(t)C_n(t) \\ &=&\left(\frac{1}{n} \sum_{j=1}^n \sin(\widehat {\widetilde {\varepsilon}}_j)\right)^2+t^2 \left(\frac{1}{n} \sum_{j=1}^n \cos(\widehat {\widetilde {\varepsilon}}_j)\right)^2 
+2t \left(\frac{1}{n} \sum_{j=1}^n \sin(\widehat {\widetilde {\varepsilon}}_j)\right) \left(\frac{1}{n} \sum_{j=1}^n \cos(\widehat {\widetilde {\varepsilon}}_j)\right) 
\\ &=& \frac{1}{n^2} \sum_{j,k=1}^n \sin(\widehat {\widetilde {\varepsilon}}_j)  \sin(\widehat {\widetilde {\varepsilon}}_k) +\frac{t^2}{n^2} \sum_{j,k=1}^n \cos(\widehat {\widetilde {\varepsilon}}_j)  \cos(\widehat {\widetilde {\varepsilon}}_k) 
+ \frac{2t}{n^2} \sum_{j,k=1}^n \sin(\widehat {\widetilde {\varepsilon}}_j)  \cos(\widehat {\widetilde {\varepsilon}}_k),
\end{eqnarray*}
where we write $\sum_{j,k}$ for the double sum $\sum_j\sum_k$. 
Also recall the trigonometric identities 
\begin{eqnarray*}
\sin z_1 \sin z_2=\frac{1}{2}[\cos(z_1-z_2)-\cos(z_1+z_2)] \\
\cos z_1 \cos z_2=\frac{1}{2}[\cos(z_1-z_2)+\cos(z_1+z_2)] \\
\sin z_1 \cos z_2=\frac{1}{2}[\sin(z_1-z_2)+\sin(z_1+z_2)]
\end{eqnarray*}
Now plug  the above expression for $D_n^2(t)$ into the test statistic \eqref{ts} and substitute the above product formulae, and integrate term-by-term the resulting expression. Then after some grouping  we obtain \eqref{ts2} by making use of the integrals 
\[\int_{-\infty}^\infty \cos (t z)  e^{-\lambda |t|}dt=\frac{2 \lambda}{z^2+\lambda^{2}}, \]
\[\int_{-\infty}^\infty t^2 \cos (t z)  e^{-\lambda |t|}dt=\frac{4 \lambda(\lambda^2-3z^2)}{(z^2+\lambda^{2})^3}, \]
\[\int_{-\infty}^\infty t \sin (t z)  e^{-\lambda |t|}dt=\frac{4z \lambda}{(z^2+\lambda^{2})^2}. \]

Equation \eqref{tsgamma} may be proved by following analogous steps, but we also need the extra integrals
\[\int_{-\infty}^\infty t^4 \cos (t z)  e^{-\lambda |t|}dt=\frac{48\lambda (5z^4-10z^2 \lambda^2+\lambda^4)}{(z^2+\lambda^{2})^5}, \]
\[\int_{-\infty}^\infty t^3 \sin (t z)  e^{-\lambda |t|}dt=\frac{48z \lambda(\lambda^2-z^2)}{(z^2+\lambda^{2})^4}. \]

\section{Proof of equation \texorpdfstring{\eqref{limstat}}{ h}} \label{Appendix B} 
Starting from equation \eqref{cf12} and using $\sin(z)=z-(z^3/3!)+\ldots$ and $\cos(z)=1-(z^2/2!)+\ldots$, we obtain (in increasing powers of $t$)
\begin{eqnarray*}
D_n(t)&=&t\left(\frac{1}{n} \sum_{j=1}^n \widehat {\widetilde {\varepsilon}}_j+1\right)-\frac{t^3}{2!}\left( \frac{1}{3} \frac{1}{n} \sum_{j=1}^n \widehat {\widetilde {\varepsilon}}^3_j+ \frac{1}{n} \sum_{j=1}^n \widehat {\widetilde {\varepsilon}}^2_j  \right) +\ldots \ , 
\end{eqnarray*}
and by squaring   
\begin{eqnarray*}
D^2_n(t)&=&t^2\left(\frac{1}{n} \sum_{j=1}^n \widehat {\widetilde {\varepsilon}}_j+1\right)^2 
-t^4\left( \frac{1}{3} \frac{1}{n} \sum_{j=1}^n \widehat {\widetilde {\varepsilon}}^3_j+ \frac{1}{n} \sum_{j=1}^n \widehat {\widetilde {\varepsilon}}^2_j  \right)\left(\frac{1}{n} \sum_{j=1}^n \widehat {\widetilde {\varepsilon}}_j+1\right) +\ldots \ . 
\end{eqnarray*}
Plugging the above expression in equation \eqref{ts} and integrating term-by-term leads to 
\begin{eqnarray*}
T_{n,w}&=&n \ \bigg{[}\frac{4}{\lambda^3}\left(\frac{1}{n} \sum_{j=1}^n \widehat {\widetilde {\varepsilon}}_j+1\right)^2 
-\frac{48}{\lambda^5}\left(\frac{1}{n} \sum_{j=1}^n \widehat {\widetilde {\varepsilon}}_j+1\right)
\left( \frac{1}{3} \frac{1}{n} \sum_{j=1}^n \widehat {\widetilde {\varepsilon}}^3_j+ \frac{1}{n} \sum_{j=1}^n \widehat {\widetilde {\varepsilon}}^2_j  \right) + \ldots \bigg{]}, \end{eqnarray*}
 and by taking the limit as $\lambda \to \infty$, we readily obtain \eqref{limstat},  where we made use of the integral \[\int_{-\infty}^\infty |t|^m e^{-\lambda |t|}dt=\frac{2 \: m!}{\lambda^{m+1}}, \ m=1,2,\ldots  \ . \]

\newpage

\end{document}